\newtheorem{thm}{Theorem}
\newtheorem{cor}{Corollary}
\newtheorem{defn}{Definition}
\theoremstyle{definition}
\theoremstyle{remark}
\newtheorem{rem}{Remark}
\DeclareMathOperator{\V}{V}
\newcommand{\D}{d}
\DeclareMathOperator{\f}{\varphi}
\newcommand{\e}{e}
\DeclareMathOperator{\cl}{cl}
\DeclareMathOperator{\h}{\chi}
\DeclareMathOperator{\N}{N}
\DeclareMathOperator{\G}{G}
\newcommand{\abs}[1]{\left\vert#1\right\vert}
\newcommand{\set}[1]{\left\{#1\right\}}
\begin{document}

\title[An Inequality for Generalized Chromatic Graphs]
 {An Inequality for Generalized Chromatic Graphs}

\author{Asen Bojilov}

\address{Faculty of Mathematics and Informatics,
University of Sofia, Bulgaria}

\email{bojilov@fmi.uni-sofia.bg}

\author{Nedyalko Nenov}

\address{Faculty of Mathematics and Informatics,
University of Sofia, Bulgaria}

\email{nenov@fmi.uni-sofia.bg}

\thanks{This work was supported by the Scientific Research Fund
of the St. Kliment Ohridski Sofia University under contract No187, 2011.}

\subjclass[2000]{Primary 05C35}

\keywords{clique number,degree sequence}


\dedicatory{}



\begin{abstract}
Let $G$ be a simple $n$-vertex graph with degree sequence
$d_1,d_2,\dots,d_n$ and vertex set $\V(G)$. The degree of
$v\in\V(G)$ is denoted by $\D(v)$. The smallest integer $r$
for which $\V(G)$ has an $r$-partition
$$
\V(G)=V_1\cup V_2\cup\dots\cup V_r,\quad V_i\cap V_j=\emptyset,
\quad,i\neq j
$$
such that $\D(v)\leq n-\abs{V_i}$, $\forall v\in V_i$, $i=1,2,\dots,r$
is denoted by $\f(G)$. In this note we prove the inequality
$$
\f(G)\geq\frac n{n-\bar{\bar{d}}},
$$
where $\bar{\bar{d}}=\sqrt{\dfrac{d_1^2+d_2^2+\cdots+d_n^2}n}$.
\end{abstract}

\maketitle

\section{Introduction}

We consider only finite, non-oriented graphs without loops and
multiple edges. We shall use the following notations:

$\V(G)$ -- the vertex set of $G$;

$\e(G)$ -- the number of edges of $G$;

$\cl(G)$ -- the clique number of $G$;

$\h(G)$ -- the chromatic number of $G$;

$\N(v)$, $v\in\V(G)$ --- the set of neighbours of a vertex $v$;

$\N(v_1,v_2,\dots,v_k)=\bigcap_{i=1}^k\N(v_i)$;

$\D(v)$ -- the degree of a vertex $v$;

$\G[V]$, $V\subseteq\V(G)$ -- induced subgraph by $V$.

\begin{defn}
Let $G$ be a graph, $\abs{\V(G)}=n$ and $V\subseteq\V(G)$.
The set $V$ is called a $\delta$-set in $G$, if
$$
\D(v)\leq n-\abs{V}\text{ for all $v\in V$.}
$$
\end{defn}

Clearly, any independent set $V$ of vertices of a graph $G$ is a
$\delta$-set in $G$ since
$\N(v)\subseteq\V(G)\setminus V$ for all $v\in V$.
It is obvious that if $V\subseteq\V(G)$ and
$\abs V\geq\max\set{\D(v)\mid v\in\V(G)}$ then
$\V(G)\setminus V$ is a $\delta$-set in $G$
(it is possible that $\V(G)\setminus V$ is not independent).

\begin{defn}
A graph $G$ is called a generalized $r$-partite graph if
there is a $r$-partition
$$
\V(G)=V_1\cup V_2\cup\dots\cup V_r,\quad V_i\cap V_j=\emptyset,
\quad,i\neq j
$$
where the sets $V_1,V_2,\dots,V_r$ are $\delta$-sets in $G$.
The smallest integer $r$ such that $G$ is a generalized $r$-partite
is denoted by $\f(G)$.
\end{defn}

As any independent vertex set of $G$ is a $\delta$-set in $G$,
we have $\f(G)\leq\h(G)$.
In fact, the following stronger inequality \cite{N2006}
\begin{equation}\label{eq1.2}
\f(G)\leq\cl(G)
\end{equation}
holds.

Let $\V(G)=\set{v_1,v_2,\dots,v_n}$ and $\cl(G)=r$.
Define
$$
\bar{d}=\frac{\D(v_1)+\D(v_2)+\cdots+\D(v_n)}n, \qquad
\bar{\bar{d}}=\sqrt{\frac{\D^2(v_1)+\D^2(v_2)+\cdots+\D^2(v_n)}n}.
$$

By the classical Turan Theorem, \cite{Tur} (see also \cite{Khad1977})
we have
\begin{equation}\label{eq1.3}
\e(G)\leq\frac{n^2(r-1)}{2r}.
\end{equation}

The equality in \eqref{eq1.3} holds if and only if
$n\equiv0\pmod r$ and $G$ is complete $r$-chromatic and regular.

It is proved in \cite{KhadTur} that
\begin{equation}\label{eq1.4}
\e(G)\leq\frac{n^2(\f(G)-1)}{2\f(G)}.
\end{equation}

According to \eqref{eq1.2} the inequality \eqref{eq1.4} is stronger
than the inequality \eqref{eq1.3}. But in case of equality in \eqref{eq1.4}
the graph $G$ is not unique as it is in the Turan theorem.

Since $\bar{d}(G)=\frac{2\e(G)}n$, it follows from \eqref{eq1.4} that
\begin{equation}\label{eq1.5}
\f(G)\geq\frac n{n-\bar{d}(G)}.
\end{equation}
In this note we give the following improvement of the inequality
\eqref{eq1.5}.

\begin{thm}\label{thm1.1}
Let $G$ be a $n$-vertex graph. Then
\begin{equation}\label{eq1.6}
\f(G)\geq\frac n{n-\bar{\bar{d}}(G)}.
\end{equation}
The equality in \eqref{eq1.6} holds if and only if
$n\equiv0\pmod{\f(G)}$ and $G$ is regular graph of degree
$\frac{n(\f(G)-1)}{\f(G)}$.
\end{thm}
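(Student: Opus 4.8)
The plan is to fix a partition realizing $\f(G)$, estimate the sum of squared degrees block by block, and then reduce the whole statement to a one–variable extremal inequality that I will settle by a tangent–line argument.

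First I would set $r=\f(G)$ and fix a partition $\V(G)=V_1\cup\dots\cup V_r$ into $\delta$-sets, writing $n_i=\abs{V_i}$, so that $n_1+\dots+n_r=n$. The defining property $\D(v)\leq n-n_i$ for $v\in V_i$ gives, block by block,
\[
\sum_{v\in\V(G)}\D^2(v)=\sum_{i=1}^r\sum_{v\in V_i}\D^2(v)\leq\sum_{i=1}^r n_i(n-n_i)^2 .
\]
Since the left-hand side is exactly $n\bar{\bar{d}}^2$, the theorem reduces to the numerical inequality
\[
\sum_{i=1}^r n_i(n-n_i)^2\leq\frac{n^3(r-1)^2}{r^2},
\]
because this gives $\bar{\bar{d}}\leq n(r-1)/r=n-n/r$, hence $n-\bar{\bar{d}}\geq n/r>0$, and dividing yields $\f(G)=r\geq n/(n-\bar{\bar{d}})$.

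The heart of the matter is this numerical inequality, and the main obstacle is that $F(x)=x(n-x)^2$ is not concave on all of $[0,n]$ (it is concave only on $[0,2n/3]$), so Jensen's inequality applied at the mean $n/r$ is not directly available. I would get around this with the tangent-line trick: let $L$ be the tangent to $F$ at $x_0=n/r$. A direct computation shows
\[
F(x)-L(x)=\Big(x-\frac nr\Big)^2(x-c),\qquad c=\frac{2n(r-1)}{r},
\]
and for $r\geq 2$ one has $c\geq n$, so $x-c\leq 0$ on the whole interval $[0,n]$ and therefore $F(x)\leq L(x)$ there. Summing $F(n_i)\leq L(n_i)$ and using $\sum n_i=n$ collapses the affine part to $\sum_{i=1}^r L(n_i)=rF(n/r)=n^3(r-1)^2/r^2$, which is precisely the bound needed.

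Finally, for the equality statement I would trace back the two inequalities. Equality in the tangent-line step forces each $n_i$ to be an equality point of $F\leq L$ lying in $[0,n]$; since the only double root is $x=n/r$ and the third root satisfies $c\geq n$, the only admissible common value is $n_i=n/r$ (for $r=2$ the third root is $c=n$, but taking some $n_i=n$ would force another block to be empty, where $F(0)<L(0)$ strictly, so this case cannot contribute to equality). Hence $r\mid n$ and all blocks have size $n/r$. Equality in the degree estimate then forces $\D(v)=n-n/r=n(\f(G)-1)/\f(G)$ for every $v$, i.e. $G$ is regular of that degree. Conversely, for such a graph $\bar{\bar{d}}=n(\f(G)-1)/\f(G)$ by direct computation, and then $n/(n-\bar{\bar{d}})=\f(G)$, so equality indeed holds.
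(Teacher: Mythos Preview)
Your argument is correct and coincides with the paper's proof through the reduction
\[
\sum_{v\in\V(G)}\D^2(v)\;\leq\;\sum_{i=1}^r n_i(n-n_i)^2,
\]
so both proofs hinge on the same numerical inequality $\sum_i n_i(n-n_i)^2\leq n^3(r-1)^2/r^2$. The paper establishes this by rewriting the sum, via the Newton identities \eqref{eq2.7}--\eqref{eq2.8}, as $n\sigma_2+3\sigma_3$ in the elementary symmetric polynomials of $n_1,\dots,n_r$, and then bounding $\sigma_2$ and $\sigma_3$ separately by the Maclaurin inequality (Theorem~\ref{thm2}). Your tangent-line trick is a genuinely different and more self-contained route: the factorisation $F(x)-L(x)=(x-n/r)^2\bigl(x-2n(r-1)/r\bigr)$ makes the inequality $F\leq L$ on $[0,n]$ immediate for $r\geq 2$ and yields the bound by summing an affine function, with no appeal to an external symmetric-function inequality. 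The equality analysis then follows the same pattern in both proofs---the paper reads $n_1=\dots=n_r=n/r$ off the equality case of Maclaurin, you read it off the root structure of $F-L$---and the deduction that $G$ is regular of degree $n(r-1)/r$ is identical. One small gap to close: your tangent-line step explicitly assumes $r\geq 2$; you should remark that $\f(G)=1$ forces every degree to be $0$, in which case $\bar{\bar d}=0$ and the statement (with equality) is trivial.
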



\section{Auxiliary results}

We denote the elementary symmetric polynomial of degree $s$ by
$\sigma_s(x_1,x_2,\dots,x_n)$, $1\leq s\leq n$, i.\,e.
$$
\sigma_s(x_1,x_2,\dots,x_n)=x_1x_2\dots x_s+\cdots.
$$

Further, we will use the following equalities:
\begin{align}
x_1^2+x_2^2+\cdots+x_n^2&=\sigma_1^2-2\sigma_2,\label{eq2.7}\\%
x_1^3+x_2^3+\cdots+x_n^3&=\sigma_1^3-3\sigma_1\sigma_2+3\sigma_3,\label{eq2.8}%
\end{align}
where $\sigma_i=\sigma_i(x_1,x_2,\dots,x_n)$.

In order to prove \ref{thm1.1} we will use the following well known inequality
(particular case of the Maclaurin inequality, see ,\cite{HL},\cite{Extr}).

\begin{thm}\label{thm2}
Let $x_1,x_2,\dots,x_n$ be non-negative reals and
$\sigma_s(x_1,x_2,\dots,x_n)=\sigma_s$. Then
\begin{equation}\label{eq2.9}
\sqrt[s]{\frac{\sigma_s}{\binom ns}}\leq
\frac{x_1+x_2+\cdots+x_n}n=\frac{\sigma_1}n,\quad
1\leq s \leq n.
\end{equation}
If $s\geq2$, then the equality in \eqref{eq2.9} holds iff
$x_1=x_2=\cdots=x_n$.
\end{thm}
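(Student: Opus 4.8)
The plan is to reduce \eqref{eq2.9} to a single extremal statement by normalization and then settle it with a symmetric smoothing argument. Since the $x_i$ are non-negative, if $\sigma_1=0$ then every $x_i=0$ and both sides of \eqref{eq2.9} vanish, so the inequality and its equality description hold trivially; I may therefore assume $\sigma_1>0$. Putting $y_i=nx_i/\sigma_1$ gives $y_i\ge0$ and $y_1+\cdots+y_n=n$, and by homogeneity $\sigma_s(x_1,\dots,x_n)=(\sigma_1/n)^s\,\sigma_s(y_1,\dots,y_n)$. Hence \eqref{eq2.9} is equivalent to $\sigma_s(y)\le\binom ns$, with equality (for $s\ge2$) equivalent to $y_1=\cdots=y_n=1$. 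So I aim to show that on the compact simplex $\Delta=\{\,y:y_i\ge0,\ y_1+\cdots+y_n=n\,\}$ the function $\sigma_s$ attains its maximum value $\binom ns$ exactly at the centre $(1,\dots,1)$. The main tool is pairwise averaging.

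Fix indices $i\ne j$ and let $\sigma_m^{(ij)}$ be the elementary symmetric polynomial of degree $m$ in the remaining $n-2$ variables (with $\sigma_0^{(ij)}=1$). Sorting the monomials of $\sigma_s$ by how many of $y_i,y_j$ they contain yields
\[
\sigma_s(y)=\sigma_s^{(ij)}+(y_i+y_j)\,\sigma_{s-1}^{(ij)}+y_iy_j\,\sigma_{s-2}^{(ij)}.
\]
Replacing both $y_i$ and $y_j$ by their average $\tfrac12(y_i+y_j)$ keeps $y$ in $\Delta$ and leaves $\sigma_s^{(ij)},\sigma_{s-1}^{(ij)}$ and $y_i+y_j$ unchanged, while the product term changes by $\big[\big(\tfrac{y_i+y_j}{2}\big)^2-y_iy_j\big]\,\sigma_{s-2}^{(ij)}=\tfrac14(y_i-y_j)^2\,\sigma_{s-2}^{(ij)}\ge0$, all variables being non-negative. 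Thus averaging any pair never decreases $\sigma_s$. By compactness $\sigma_s$ has a maximizer $y^\ast\in\Delta$, and there this increment must be $\le0$, hence $=0$, for every pair; so for each $i\ne j$ either $y_i^\ast=y_j^\ast$ or $\sigma_{s-2}^{(ij)}(y^\ast)=0$.

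The crux is ruling out the degenerate alternative, which simultaneously handles the equality case. For non-negative reals $\sigma_{s-2}^{(ij)}=0$ precisely when fewer than $s-2$ of the $n-2$ outside variables are positive; together with $y_i^\ast,y_j^\ast$ this leaves at most $s-1$ positive coordinates in $y^\ast$, forcing $\sigma_s(y^\ast)=0$. Now suppose $y^\ast$ is not the centre, so some pair has $y_i^\ast\ne y_j^\ast$; then $\sigma_{s-2}^{(ij)}(y^\ast)=0$. For $s=2$ this is impossible, since $\sigma_0^{(ij)}=1$; for $s\ge3$ it forces $\sigma_s(y^\ast)=0<\binom ns$. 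Either way $y^\ast$ fails to be a maximizer, because the centre already gives $\sigma_s(1,\dots,1)=\binom ns>0$. Hence the maximum is $\binom ns$, attained only at $(1,\dots,1)$, which is exactly the normalized inequality and, for $s\ge2$, its equality characterization; unwinding the substitution gives \eqref{eq2.9}. For $s=1$ both sides coincide identically, consistent with the theorem asserting the equality description only for $s\ge2$.

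I expect the sole real obstacle to be the degenerate case $\sigma_{s-2}^{(ij)}(y^\ast)=0$ in the smoothing step, where averaging need not strictly increase $\sigma_s$; the counting observation above (too few positive coordinates, hence $\sigma_s=0$) disposes of it and pins down equality at the same time. An alternative route would deduce \eqref{eq2.9} from Newton's inequalities $p_k^2\ge p_{k-1}p_{k+1}$ for the normalized means $p_k=\sigma_k/\binom nk$, by showing $p_k^{1/k}$ is non-increasing in $k$ and specializing to $k=s$ versus $k=1$; but that would require separately proving Newton's inequalities, via Rolle's theorem applied to $\prod_i(t-x_i)$ and its derivatives, and would make the equality analysis less transparent, so I favour the direct smoothing argument.
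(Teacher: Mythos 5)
Your proof is correct, but there is nothing in the paper to compare it against line by line: the paper does not prove Theorem~\ref{thm2} at all. It presents the statement as a known particular case of the Maclaurin inequality, citing \cite{HL} and \cite{Extr} for the result and \cite{Khadm} for ``a straight and very short'' proof, and then uses it as a black box in the proof of Theorem~\ref{thm1.1}. Your smoothing argument is therefore a genuine, self-contained addition rather than a variant of the paper's route. The key steps all check out: the decomposition $\sigma_s=\sigma_s^{(ij)}+(y_i+y_j)\sigma_{s-1}^{(ij)}+y_iy_j\sigma_{s-2}^{(ij)}$ is the correct grading of monomials by their degree in $y_i,y_j$; pairwise averaging changes $\sigma_s$ by $\tfrac14(y_i-y_j)^2\sigma_{s-2}^{(ij)}\geq0$ and stays in the simplex; compactness gives a maximizer; and your counting argument in the degenerate case (if $\sigma_{s-2}^{(ij)}(y^\ast)=0$ then $y^\ast$ has at most $s-1$ positive coordinates, so $\sigma_s(y^\ast)=0<\binom ns$) correctly eliminates non-central maximizers for $s\geq3$, while $\sigma_0^{(ij)}=1$ disposes of $s=2$; this simultaneously yields the equality characterization, which is exactly what the paper needs in its equality analysis of \eqref{eq1.6}. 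The trade-off: the paper's choice (citation of Maclaurin) keeps the note short and leans on classical literature, where the standard proofs go through Newton's inequalities and make the equality case somewhat indirect; your approach costs a page but is elementary, self-contained, and makes the equality case for $s\geq2$ transparent, which is a real advantage given that Theorem~\ref{thm1.1} relies on precisely that equality statement.
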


A straight and very short prove of Theorem \ref{thm2} is given in
\cite{Khadm}.


\section{Proof of the Theorem~\ref{thm1.1}}

Let $\f(G)=r$, $\V(G)=\set{v_1,v_2,\dots,v_n}$ and
\begin{equation}\label{eq2.10}
\V(G)=V_1\cup V_2\cup\dots\cup V_r,\quad
V_i\cap V_j=\emptyset,\quad i\neq j,
\end{equation}
where $V_1,V_2,\dots,V_r$ are $\delta$-sets in $G$, i.\,e.
if $n_i=\abs{V_i}$, $i=1$, 2,\dots, $r$ then
\begin{equation}\label{eq2.11}
\D(v)\leq n-n_i,\quad\forall v\in V_i.
\end{equation}
It follows from \eqref{eq2.10} that
$$
\D^2(v_1)+\D^2(v_2)+\cdots+\D^2(v_n)=
\sum_{i=1}^r\sum_{v\in V_i}\D^2(v).
$$
According to \eqref{eq2.11}
$$
\sum_{v\in V_i}\D^2(v)\leq n_i(n-n_i)^2.
$$
Thus we have
$$
\D^2(v_1)+\D^2(v_2)+\cdots+\D^2(v_n)\leq
\sum_{i=1}^rn_i(n-n_i)^2.
$$
From \eqref{eq2.7} and \eqref{eq2.8} we see that
$$
\sum_{i=1}^rn_i(n-n_i)^2=n\sigma_2+3\sigma_3,
$$
where $\sigma_2=\sigma_2(n_1,n_2,\dots,n_r)$,
$\sigma_3=\sigma_3(n_1,n_2,\dots,n_r)$.

Thus we obtain the inequality
\begin{equation}\label{eq2.12}
\D^2(v_1)+\D^2(v_2)+\cdots+\D^2(v_n)\leq
n\sigma_2+3\sigma_3.
\end{equation}
Since $\sigma_1=n$, Theorem~\ref{thm2} yields
\begin{equation}\label{eq2.13}
\sigma_2\leq\frac{n^2(r-1)}{2r}\text{\ \ and\ \ }
\sigma_3\leq\frac{n^3(r-1)(r-2)}{6r^2}.
\end{equation}
Now the inequality \eqref{eq1.6} follows from \eqref{eq2.12} and \eqref{eq2.13}.

Obviously, if $n\equiv0\pmod r$ and
$\D(v_1)=\D(v_2)=\cdots=\D(v_r)=\frac{n(r-1)}r$ we have equality
in \eqref{eq1.6}. Now, let us suppose that we have equality in inequality
\eqref{eq1.6}. Then we have equality in \eqref{eq2.13} and
\eqref{eq2.11} too.
From $r=\f(G)=\frac n{n-\bar{\bar{d}}}$ it is clear that
$r$ divides $n$. By Theorem~\ref{thm2}, we have
$$
n_1=n_2=\cdots=n_r=\frac nr.
$$
Because of the equality in \eqref{eq2.11}, i.\,e. $\D(v)=n-n_i$,
$v\in V_i$, we have
$$
\D(v_1)=\D(v_2)=\cdots=\D(v_r)=\frac{n(r-1)}r.
$$
Theorem \ref{thm1.1} is proved.

\section{Some corollaries}

\begin{defn}[\cite{Khad1977}]
Let $G$ be a graph and $v_1,v_2,\dots,v_r\in\V(G)$. The sequence
$v_1,v_2,\dots,v_r$ is called an $\alpha$-sequence in $G$ if the
following conditions are satisfied:
\begin{enumerate}[(i)]
\item $\D(v_1)=\max\set{\D(v)\mid v\in|\V(G)}$;
\item $v_i\in\N[v_1,v_2,\dots,v_{i-1}]$ and $v_i$ has maximal
degree in the induced subgraph $\G[\N(v_1,v_2,\dots,v_{i-1}]$,
$2\leq i\leq r$.
\end{enumerate}
\end{defn}

\begin{defn}
Let $G$ be a graph and $v_1,v_2,\dots,v_r\in\V(G)$. The sequence
$v_1,v_2,\dots,v_r$ is called a $\beta$-sequence in $G$ if the
following conditions are satisfied:
\begin{enumerate}[(i)]
\item $\D(v_1)=\max\set{\D(v)\mid v\in|\V(G)}$;
\item $v_i\in\N(v_1,v_2,\dots,v_{i-1})$ and
$\D(v_i)=\max\set{\D(v)\mid v\in\N(v_1,v_2,\dots,v_{i-1})}$,
$2\leq i\leq r$.
\end{enumerate}
\end{defn}

\begin{cor}
Let $v_1,v_2,\dots,v_r$, $r\geq2$ be an $\alpha$- or a $\beta$-sequence
in an $n$-vertex graph $G$ such that $\N(v_1,v_2,\dots,v_r)$ is a
$\delta$-set. Then
\begin{equation}\label{eq4.14}
r\geq\frac n{n-\bar{\bar{d}}}.
\end{equation}
\end{cor}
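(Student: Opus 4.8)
The plan is to deduce the Corollary directly from Theorem~\ref{thm1.1}. The right-hand side of \eqref{eq4.14} is exactly the right-hand side of \eqref{eq1.6}, so it suffices to prove that the hypotheses force $\f(G)\leq r$; then Theorem~\ref{thm1.1} gives $r\geq\f(G)\geq\frac n{n-\bar{\bar{d}}(G)}$, which is \eqref{eq4.14}. Thus the whole task reduces to producing a partition of $\V(G)$ into $r$ $\delta$-sets.

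To build such a partition I would use the nested common neighbourhoods. Put $N_0=\V(G)$ and $N_i=\N(v_1,\dots,v_i)$ for $1\leq i\leq r$, so that $N_0\supseteq N_1\supseteq\cdots\supseteq N_r$. Because $v_i\in\N(v_1,\dots,v_{i-1})$ while $v_i\notin\N(v_i)$, each ``shell'' $V_i:=N_{i-1}\setminus N_i$ contains $v_i$ and so is nonempty, and the shells together with the deepest common neighbourhood partition $\V(G)$. The bookkeeping I would arrange so that the last block is precisely the common neighbourhood that is assumed to be a $\delta$-set, leaving exactly $r$ blocks in all.

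It then remains to check that every shell $V_i$ is a $\delta$-set, i.e. $\D(v)\leq n-\abs{V_i}$ for all $v\in V_i$. The size is controlled uniformly: since $N_i=N_{i-1}\cap\N(v_i)$ we have $\abs{V_i}=\abs{N_{i-1}}-\abs{N_i}=\abs{N_{i-1}\setminus\N(v_i)}\leq\abs{\V(G)\setminus\N(v_i)}=n-\D(v_i)$, using only $N_{i-1}\subseteq\V(G)$. For the degrees I would invoke the defining property of the sequence: for a $\beta$-sequence condition (ii) says that $v_i$ has maximal degree in $\N(v_1,\dots,v_{i-1})=N_{i-1}$, so $\D(v)\leq\D(v_i)$ for every $v\in V_i\subseteq N_{i-1}$, and combining the two bounds yields $\D(v)\leq\D(v_i)\leq n-\abs{V_i}$, as required.

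The main obstacle is the $\alpha$-sequence case, where condition (ii) only makes $v_i$ of maximal degree in the induced graph $\G[N_{i-1}]$; the true degree $\D(v)$ of a vertex $v\in V_i$ may exceed $\D(v_i)$ because of neighbours lying outside $N_{i-1}$, so the clean degree comparison above is not available and a separate argument is needed to bound $\D(v)$ on each shell. I expect this, together with the index bookkeeping that makes the deepest common neighbourhood serve as the last of exactly $r$ $\delta$-sets (this is where the hypothesis that it is a $\delta$-set is used, so as to avoid producing $r+1$ blocks), to be the delicate part of the proof. Once both the shells and the terminal block are shown to be $\delta$-sets, $\f(G)\leq r$ follows and Theorem~\ref{thm1.1} finishes the argument.
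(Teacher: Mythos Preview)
Your overall strategy is exactly the paper's: reduce \eqref{eq4.14} to the inequality $\f(G)\leq r$ and then invoke Theorem~\ref{thm1.1}. The paper, however, does not prove $\f(G)\leq r$ from scratch; it simply quotes the result from \cite{Khad2005} that under the stated hypothesis $G$ is generalized $r$-partite, and is done in one line.

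Your attempt to supply a direct proof of that cited fact is where the difficulties you flag are genuine, not merely bookkeeping. First, the partition you build has $r+1$ pieces, namely the shells $V_1,\dots,V_r$ together with $N_r$; the hypothesis that $N_r$ is a $\delta$-set only tells you the $(r{+}1)$-st block is admissible, it does not by itself collapse the count to $r$. Merging $N_r$ with $V_r$ produces $N_{r-1}$, but even in the $\beta$-case your degree estimate $\D(v)\leq\D(v_r)$ for $v\in N_{r-1}$ does not obviously yield $\D(v)\leq n-\abs{N_{r-1}}$, so the merge is not free. Second, as you note, for an $\alpha$-sequence the maximality of $v_i$ is only inside $\G[N_{i-1}]$, so the global bound $\D(v)\leq\D(v_i)$ can fail on a shell and your argument for the shells being $\delta$-sets breaks down.

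In short: the reduction to Theorem~\ref{thm1.1} is correct and matches the paper, but the step $\f(G)\leq r$ is a nontrivial lemma whose proof (handled in \cite{Khad2005}) requires more than the shell construction you outline; the obstacles you identify are real gaps, not just details to tidy up.
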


\begin{proof}
Since $\N(v_1,v_2,\dots,v_ð)$ is a $\delta$-set, $G$ is a generalized
$r$-partite graph, \cite{Khad2005}. Thus, $r\geq\f(G)$ and \eqref{eq4.14}
follows from Theorem \ref{thm1.1}.
\end{proof}

\begin{cor}
Let $v_1,v_2,\dots,v_r$, $r\geq2$, be a $\beta$-sequence
in $n$-vertex graph $G$ such that
\begin{equation}\label{eq4.15}
\D(v_1)+\D(v_2)+\cdots+\D(v_r)\leq(r-1)n.
\end{equation}
Then the inequality \eqref{eq4.14} holds.
\end{cor}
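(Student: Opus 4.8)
The plan is to bound $\D^2(v_1)+\cdots+\D^2(v_n)$ directly and then read off \eqref{eq4.14} from the definition of $\bar{\bar{d}}$; I would \emph{not} try to deduce the statement from the preceding corollary, because under \eqref{eq4.15} alone the set $\N(v_1,\dots,v_r)$ need not be a $\delta$-set (already $G=K_7\cup 2K_1$ with $r=3$, where $\D(v_1)+\D(v_2)+\D(v_3)=18=(r-1)n$, has this defect). Write $N_0=\V(G)$ and $N_i=\N(v_1,\dots,v_i)$, and put $m_i=\abs{N_i}$, $d_i=\D(v_i)$, $a_i=m_{i-1}-m_i$, so that $\V(G)$ is partitioned into the layers $A_i=N_{i-1}\setminus N_i$ $(1\le i\le r)$ together with $N_r$.

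Two facts drive the argument. Since, for a $\beta$-sequence, $v_i$ has the largest degree \emph{in $G$} among the vertices of $N_{i-1}$, every $v\in N_{i-1}$ satisfies $\D(v)\le d_i$; hence $\D(v)\le d_i$ on $A_i$ and $\D(v)\le d_r$ on $N_r$. Counting the non-neighbours of $v_i$ gives $a_i=\abs{N_{i-1}\setminus\N(v_i)}\le 1+(n-1-d_i)=n-d_i$. Summing the squared degrees over the partition, the first fact yields
\[
\D^2(v_1)+\cdots+\D^2(v_n)\le\sum_{i=1}^r a_i d_i^2+m_r d_r^2 .
\]

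The decisive step is to reabsorb the block $N_r$ into the layers, and this is exactly where \eqref{eq4.15} is used. By hypothesis the caps satisfy $\sum_{i=1}^r(n-d_i)=rn-\sum_{i=1}^r d_i\ge rn-(r-1)n=n$, so the $m_r$ vertices of $N_r$ can be distributed among the $A_i$ without exceeding any cap $n-d_i$; since they carry the smallest weight $d_r^2$, this does not decrease the sum. One thus obtains sizes $\tilde a_i\le n-d_i$ with $\sum_{i=1}^r\tilde a_i=n$ and $\sum_{i=1}^r\tilde a_i d_i^2\ge\sum_{i=1}^r a_i d_i^2+m_r d_r^2$. As $d_i\le n-\tilde a_i$ it follows that $\sum_i\tilde a_i d_i^2\le\sum_i\tilde a_i(n-\tilde a_i)^2$, and now $\sigma_1(\tilde a_1,\dots,\tilde a_r)=n$, so by \eqref{eq2.7} and \eqref{eq2.8} the right-hand side equals $n\sigma_2+3\sigma_3$; the Maclaurin bounds \eqref{eq2.13} then give $\le\frac{(r-1)^2n^3}{r^2}$, precisely as in the proof of Theorem~\ref{thm1.1}. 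Dividing by $n$ gives $\bar{\bar{d}}^2\le\frac{(r-1)^2n^2}{r^2}$, i.e. $\bar{\bar{d}}\le n-\frac nr$, which rearranges to \eqref{eq4.14}.

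The main obstacle is this reabsorption step. The tempting move, treating $N_r$ as an $(r+1)$-st block and applying Theorem~\ref{thm2} to the $r+1$ sizes, fails twice over: $N_r$ may not be a $\delta$-set (so $\D(v)\le d_r$, not $\D(v)\le n-m_r$, is all one has on it), and even granting it one would only reach a bound with $r+1$ in place of $r$. The point to get right is that the $r$ caps $n-d_i$ have total mass at least $n$ \emph{because of} \eqref{eq4.15}, so the extra block can be dissolved into the $r$ layers while keeping the quadratic sum from dropping and keeping $\sum_i\tilde a_i=n$; this is what lets the Maclaurin inequality be applied with $r$ variables and reproduces the extremal value $\frac{(r-1)^2n^3}{r^2}$ of Theorem~\ref{thm1.1}.
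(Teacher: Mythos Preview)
Your argument is correct, but it is a genuinely different route from the paper's. The paper's proof is one line: it cites \cite{KhadSat,KhadMax} for the fact that a $\beta$-sequence satisfying \eqref{eq4.15} forces $G$ to be generalized $r$-partite, hence $r\ge\varphi(G)$, and then invokes Theorem~\ref{thm1.1}. You instead work from scratch, building the layer decomposition $A_1,\dots,A_r,N_r$ and bounding $\sum_v \D^2(v)$ directly. It is worth noting that your ``reabsorption'' step actually \emph{recovers} the cited result: once $N_r$ is redistributed to obtain sizes $\tilde a_i\le n-d_i$ with $\sum_i\tilde a_i=n$, the corresponding blocks $\tilde A_i$ are $\delta$-sets (every vertex in $\tilde A_i$ has degree $\le d_i\le n-\tilde a_i$), so you have exhibited a generalized $r$-partition and hence $\varphi(G)\le r$. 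From that point on you simply rerun the computation of Theorem~\ref{thm1.1}. Thus the paper's approach is shorter by outsourcing the key structural step to the literature, while yours is self-contained and in fact gives an independent proof of the result from \cite{KhadSat,KhadMax} that \eqref{eq4.15} implies $G$ is generalized $r$-partite. Your remark that one cannot go through the \emph{preceding} corollary (because $\N(v_1,\dots,v_r)$ need not be a $\delta$-set) is correct, and the paper does not attempt that route either.
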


\begin{proof}
From \eqref{eq4.15} it follows that $G$ is a generalized $r$-partite
graph (\cite{KhadSat},\cite{KhadMax}).
\end{proof}

The next corollary follows from \eqref{eq1.2} and Theorem \ref{thm1.1}.

\begin{cor}[\cite{Edw}]
Let $G$ be an $n$-vertex graph. Then
\begin{equation}\label{eq4.16}
\cl(G)\geq\frac n{n-\bar{\bar{d}}}.
\end{equation}
\end{cor}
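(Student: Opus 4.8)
The plan is to deduce \eqref{eq4.16} by simply transporting the lower bound of Theorem~\ref{thm1.1} through the comparison inequality \eqref{eq1.2}. The corollary asserts a lower bound on $\cl(G)$ in terms of the quadratic mean $\bar{\bar{d}}$ of the degree sequence, and we already have exactly such a bound for the generalized chromatic invariant $\f(G)$; the only thing missing is a bridge relating $\f(G)$ to $\cl(G)$, and that bridge is precisely \eqref{eq1.2}.

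Concretely, I would first recall \eqref{eq1.2}, namely $\f(G)\leq\cl(G)$, which holds for every graph $G$ and which I am entitled to assume here, since it was stated (with reference \cite{N2006}) in the introduction. Next I would invoke Theorem~\ref{thm1.1} to obtain $\f(G)\geq\frac n{n-\bar{\bar{d}}}$. Chaining the two gives $\cl(G)\geq\f(G)\geq\frac n{n-\bar{\bar{d}}}$, which is exactly the assertion \eqref{eq4.16}. In effect, the entire analytic content of the corollary already lives inside Theorem~\ref{thm1.1}, and the comparison $\f\leq\cl$ merely relabels the left-hand side.

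Since both ingredients are in place, I expect no genuine obstacle here: the corollary is a one-line consequence of two previously established facts, and in particular it recovers the result of \cite{Edw} as a special case of the sharper inequality for $\f(G)$. The only point I would take care to verify is that the quantity $\bar{\bar{d}}$ in Theorem~\ref{thm1.1} is the same quadratic mean of the degrees $\D(v_1),\dots,\D(v_n)$ that appears in \eqref{eq4.16}, so that the two bounds align verbatim and no renormalization is needed; this is immediate from the definition of $\bar{\bar{d}}$ given in the introduction.
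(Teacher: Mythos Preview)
Your proposal is correct and matches the paper's own argument exactly: the paper states that this corollary ``follows from \eqref{eq1.2} and Theorem~\ref{thm1.1}'', i.e., chain $\cl(G)\geq\f(G)\geq\frac n{n-\bar{\bar{d}}}$. There is nothing to add or change.
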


\begin{rem}
The prove of the inequality \eqref{eq4.16} given in \cite{Edw}
is incorrect, since the arguments on p.53, rows 8 and 9 from top,
is not valid.
\end{rem}

\begin{cor}
Let $G$ be an $n$-vertex graph such that
\begin{equation}\label{eq4.17}
\cl(G)=\frac n{n-\bar{\bar{d}}}.
\end{equation}
Then $G$ is regular and complete $\cl(G)$-chromatic graph.
\end{cor}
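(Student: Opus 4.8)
The plan is to show that the hypothesis forces simultaneous equality throughout the chain $\frac{n}{n-\bar{\bar{d}}}\le\f(G)\le\cl(G)$, and then to feed the resulting structural data into Turán's theorem, whose equality case already determines the extremal graph.

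First I would combine the hypothesis \eqref{eq4.17} with Theorem~\ref{thm1.1} and the inequality \eqref{eq1.2}. Writing $r=\cl(G)$, these give
$$
\frac{n}{n-\bar{\bar{d}}}\le\f(G)\le\cl(G)=\frac{n}{n-\bar{\bar{d}}},
$$
so all three quantities coincide: $\f(G)=\cl(G)=\frac{n}{n-\bar{\bar{d}}}=r$. In particular, equality holds in \eqref{eq1.6}. Next I would invoke the equality clause of Theorem~\ref{thm1.1}: equality in \eqref{eq1.6} forces $n\equiv0\pmod r$ and shows that $G$ is regular of degree $\frac{n(r-1)}{r}$. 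This already yields one half of the conclusion, that $G$ is regular.

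To obtain that $G$ is complete $r$-chromatic I would simply count edges. Since $G$ is $\frac{n(r-1)}{r}$-regular on $n$ vertices,
$$
\e(G)=\frac12\cdot n\cdot\frac{n(r-1)}{r}=\frac{n^2(r-1)}{2r},
$$
which is precisely equality in the Turán bound \eqref{eq1.3} with $r=\cl(G)$. The equality statement of Turán's theorem recorded just after \eqref{eq1.3} then gives that $G$ is complete $r$-chromatic (and regular), i.e.\ complete $\cl(G)$-chromatic, which completes the proof.

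The argument is essentially a matching of two equality cases, so I do not expect a serious computational obstacle. The only point requiring care is to confirm that the degree value delivered by the equality clause of Theorem~\ref{thm1.1} is exactly the one that saturates the Turán inequality; the edge count above makes this explicit, and it is what lets the Turán equality case take over to supply the complete multipartite structure.
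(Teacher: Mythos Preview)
Your proof is correct and matches the paper's own argument essentially step for step: squeeze $\f(G)$ between $\frac{n}{n-\bar{\bar{d}}}$ and $\cl(G)$ to force equality in \eqref{eq1.6}, apply the equality clause of Theorem~\ref{thm1.1} to get regularity of degree $\frac{n(r-1)}{r}$, compute $\e(G)=\frac{n^2(r-1)}{2r}$, and invoke the equality case of Tur\'an's theorem. The only cosmetic difference is that the paper sets $r=\f(G)$ first while you set $r=\cl(G)$, which is immaterial.
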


\begin{proof}
Let $\f(G)=r$. By \eqref{eq4.17}, \eqref{eq1.2} and Theorem \ref{thm1.1}
we have
$$
\cl(G)=\f(G)=r=\frac n{n-\bar{\bar{d}}}.
$$
By Theorem \ref{thm1.1}, $n\equiv0\pmod r$ and $G$ is a regular graph
of degree $\frac{n(r-1)}r$. Thus
$$
\e(G)=\frac{n^2(r-1)}{2r}=\frac{n^2(\cl(G)-1)}{2\cl(G)}.
$$
According to Turan Theorem, $G$ is complete $r$-chromatic and regular.
\end{proof}

\bibliographystyle{amsplain}

\end{document}